\newtheorem{theorem}{Theorem}
\newtheorem{theoremc}{Theorem}
\newtheorem{prop}[theoremc]{Proposition}
\newcommand\bib[1]{\bibitem[#1]{#1}}
\newcommand{\comm}[1]{}
\newcommand\C{{\mathbb C}}
\newcommand\op[1]{\mathop{\rm #1}\nolimits}
\newcommand\ot{\otimes}
\newcommand\p{\partial}
\newcommand\R{{\mathbb R}}
\newcommand\vp{\varphi}
\newcommand\z{\sigma}
\newcommand\Z{{\mathbb Z}}
\begin{document}

 \title[Submaximal rank 2 distributions in 5D]{On the models of submaximal\\
symmetric rank 2 distributions in 5D}
 \author{Boris Doubrov, Boris Kruglikov}
 \address{BD: Belarusian State University, Nezavisimosti ave. 4, 220030, Minsk, Belarus.
 \quad E-mail: {\tt doubrov@bsu.by}.
 \newline
 \hphantom{W} BK: \ Institute of Mathematics and Statistics, University of Troms\o, Troms\o\ 90-37, Norway.
 \quad E-mail: {\tt boris.kruglikov@uit.no}. }
 \keywords{Non-holonomic vector distributions, Monge equations, submaximal symmetry algebras.}
 \subjclass[2010]{Primary 58D19, 58A30; Secondary 17B66, 58A15}

 \begin{abstract}
There are two different approaches to exhibit submaximal symmetric rank 2 distributions in 5D
via Monge equations. In this note we establish precise relations between these models,
find auto-equivalences of one family, and treat two special equations.
 \end{abstract}

 \maketitle

\section{Introduction}

In the seminal paper \cite{C} E.\,Cartan proved that the maximal dimension of the symmetry
group of a rank 2 distribution in 5D is 14, whenever the distribution is completely non-holonomic
(bracket-generating) and is not the second prolongation
of the contact distribution in 3D (or not the first prolongation of the Engel distribution in 4D).
These requirements are equivalent to the growth vector being $(2,3,5)$ - we assume this from now on;
we also assume throughout that the manifold under consideration is connected.
The maximal symmetric model is (locally) unique.

Cartan also proved that the next (submaximal) dimension of the symmetry group is 7
(this is done under implicit assumption of the constancy of root type for the fundamental quartic
invariant, but this requirement can be removed \cite{KT}). Moreover he found that
these possess exactly 1 invariant $I$ (in fact, $I$ is not an invariant, but $I^2$ is; see \cite{K$_2$})
and classified all possible submaximal symmetric models.
He realized them as Monge equations \cite{G},
i.e. underdetermined ODEs consisting of 1 equation on 2 functions
(in fact, in \cite{C} only scalar second order PDEs with respect to contact transformations
are considered; Cartan proved equivalence of their geometry to that of rank 2 distributions, but he did not
consider Monge equations explicitly in this paper; 
the relation is however not difficult to uncover \cite{AK,K$_1$}).

One family of these Monge equations is especially simple \cite[p.113]{C}
 \begin{equation}\label{Pm}
y'=(z'')^m.
 \end{equation}
Here $m=0,1$ correspond to linear equations, which have infinite-dimensional symmetry algebra;
$m=-1,\frac13,\frac23,2$ (it is better to express this in terms of the parameter $k=2m-1$,
yielding $k=\pm3^{\pm1}$) correspond to $G_2$-symmetry;
all other parameters give 7-dimensional solvable symmetry algebra.

This family however misses 1 submaximal symmetric model 
(beside this it is complete over $\C$ but not over $\R$), while
the following family of models is complete (over both fields $\C$ and $\R$,
and we focuss on the latter):
 \begin{equation}\label{Qm}
y'=(z'')^2+r_1(z')^2+r_2z^2.
 \end{equation}
Here $r_1,r_2$ are arbitrary real constants, defined up to
transformation $r_1\mapsto cr_1$, $r_2\mapsto c^2r_2$. The value $r_2=\frac9{100}r_1^2$ corresponds
to $G_2$-symmetry, otherwise the symmetry group is 7-dimensional solvable.
This family arises in general on p.113 of \cite{C} and also on p.171 loc.cit. with the special values $r_1=\frac{10}3I$, $r_2=1+I^2$ ($I$ is constant for a 7D symmetry; also Cartan's
erroneous constant $\frac56$ is replaced
here to the correct value $\frac{10}3$ according with the observation of \cite{S}).

Cartan's invariant equals $I^2=\frac{(k^2+1)^2}{(k^2-9)(1/9-k^2)}$ via $k=2m-1$ for the family (\ref{Pm})
and $I^2=\bigl(\frac{100r_2}{9r_1^2}-1\bigr)^{-1}$ for the family (\ref{Qm}).
Note also that the semi-invariant $I$ can be real or imaginary, i.e. the invariant $I^2$ is
allowed to be negative (and also $\infty$), so following \cite{K$_2$} one should rather use
the invariant $25J=9(1+I^{-2})$ taking values in $\R_+$.

These two families were later studied in respectively \cite{K$_2$} and \cite{DZ$_2$}
(also in \cite{N}), but an exact relation between them was not written down in the literature.
In fact, in the first cited paper, it was even observed that
the parametrization of the bulk of the submaximal structures via (\ref{Pm}) is
4-fold, with the equivalent cases corresponding to $k\mapsto \pm k^{\pm1}$. However this
correspondence was not made explicit. In this paper we write the equivalence between all the models
explicitly and explain how we obtained the transformations.

\section{Submaximal (2,3,5) distributions: Monge equations I}\label{S2}

Let us start by exploring the submaximal symmetric family $P_m$ given by
(\ref{Pm}). We write the symmetries for the non-exceptional parameters of the above
Monge equation explicitly \cite{K$_2$} (we are using the jet coordinates $z_1$ instead of $z'$
and $z_2$ instead of $z''$):
 \begin{multline*}
\hspace{-12pt}\op{sym}(P_m)=\langle
W_1=\p_x, \ W_2=\p_y, \ W_3=\p_z, \ W_4=x\p_x+y\p_y+2z\p_z+z_1\p_{z_1},\\
W_5=x\p_z+\p_{z_1}, \ W_6=my\p_y+z\p_z+z_1\p_{z_1}+z_2\p_{z_2}, \ W_7=\\
\hspace{10pt}=
z_2^{m-1}\p_x+(m-1)\!\int\!z_2^{2m-2}dz_2\cdot\p_y+(z_1z_2^{m-1}-\tfrac1my)\p_z+(1-\tfrac1m)z_2^m\p_{z_1}
\rangle.
 \end{multline*}
Below and in what follows we use the notation $W_6'=W_6-\frac12W_4$.
The symmetry algebra is solvable and has the structure
$\mathfrak{m}^7(m)=\mathfrak{n}^5\rtimes\R^2$, where
$\mathfrak{n}^5=\mathfrak{h}_{-1}\oplus\mathfrak{h}_{-2}$ is the Heisenberg algebra
(indices denote the grading) given by the symplectic form on
$\mathfrak{h}_{-1}=\langle W_1,W_2,W_5,W_7\rangle$ with values in $\mathfrak{h}_{-2}=\langle W_3\rangle$.
The only non-trivial brackets are
 $$
[W_1,W_5]=W_3, \ [W_2,W_7]=\tfrac{-1}mW_3
 $$
(we keep the normalizing factor). This is right-extended (extension via derivations)
by $\R^2=\langle W_4,W_6'\rangle$ via the grading element $W_4$, $\op{ad}(W_4)|_{\mathfrak{h}_k}=k\cdot\op{id}$,
and $W_6'$ given by the action ($\theta_i$ is the coframe dual to $W_i$)
 $$
\op{ad}(W_6')=\tfrac12W_1\ot\theta_1+(\tfrac12-m)W_2\ot\theta_2
-\tfrac12W_5\ot\theta_5+(m-\tfrac12)W_7\ot\theta_7;
 $$
for $m=1/2$, when the spectrum of this operator on $\mathfrak{n}^5$ is multiple,
we add $+\frac12W_2\ot\theta_7$ to the above expression, introducing the Jordan block
(in the normal form of $\op{ad}(W_6')|_{\mathfrak{h}_{-1}}$ the sizes of blocks are $(1,1,2)$).

It is easy to see that, in the case the symmetry algebra is transitive, the
symmetry algebra together with the filtration induced by the distribution and isotropy
subalgebra (see \cite{K$_2$}) is the complete invariant of the geometric structure
(= 2-distribution encoding the equation). This implies that the conformal spectrum of
$\op{ad}(W_6')$ is the complete invariant, leading to the numerical invariant \cite{K$_2$}
$J(m)=\frac{(1-2m)^2}{(1-2m+2m^2)^2}$.

Consequently the number of elements in the equivalence class of a generic element of the family $P_m$ 
is 4 (we encode $P_m$ as a the rank 3 Pfaffian system on $M^5=\R^5(x,y,z,z_1,z_2)$;
its transformations correspond to internal equivalences of the Monge equations).

The group $G=\Z_2\times\Z_2$ with generators $a$, $b$ (in the 1st and 2nd copy
of $\Z_2$ respectively; $a^2=b^2=1$, $ab=ba$) acts on the open subset 
 $$\R^\times=\R\setminus\{\tfrac12\}\subset\R$$ 
in the parameter space $\R=\R(m)$, 
and also on its compactification $\R P^1=\R\cup\infty$, as follows:
 $$
a\cdot m=\bar{m}=1-m,\ b\cdot m=\mu=\frac{m}{2m-1},\ (ab)\cdot m=\bar{\mu}=\frac{m-1}{2m-1},
 $$
and the orbit consists of equivalent parameters: $P_m\simeq P_{g\cdot m}$, $g\in G$
(this equivalence is indeed not unique, as there is a 7D group of
symmetries of $P_m$, which is bigger than the Lie group generated by $\exp(\mathfrak{m}^7)$).

More precisely, we claim

 \begin{prop}
Systems $P_m$ and $P_n$ are equivalent if and only if $n=g\cdot m$, where $n,m\in\R^\times$
(neither of them is equivalent to $P_{\frac12}$).
An equivalence $T_a$ mapping $P_m$ to $P_{\bar{m}}$ is given by the Legendre transform:
 $$
T_a(x,y,z,z_1,z_2)=(z_1,y,xz_1-z,x,1/z_2).
 $$
An equivalence $T_b$ mapping $P_m$ to $P_\mu$ is given by the following formula
 \begin{multline*}
T_b(x,y,z,z_1,z_2)=\Bigl(\tfrac{1}{\sqrt{2m-1}}xz_2^{1-m}, \
\tfrac{m}{\sqrt{2m-1}}z_1-\tfrac{m-1}{\sqrt{2m-1}}xz_2, \\ \qquad z-xz_1+\tfrac{1}{m}xyz_2^{1-m}+\tfrac{(m-1)^2}{m(2m-1)}x^2z_2, \
\tfrac{\sqrt{2m-1}}{m}y-\tfrac{m-1}{m\sqrt{2m-1}}xz_2^m, \ z_2^{2m-1}\Bigr).
 \end{multline*}
An equivalence mapping $P_m$ to $P_{\bar{\mu}}$ can be taken as the composition $T_aT_b$.
 \end{prop}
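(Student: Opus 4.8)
The plan is to prove the two implications separately: the ``only if'' direction from the numerical invariant $J$, and the ``if'' direction by verifying the explicit formulas for $T_a,T_b$ on the associated Pfaffian systems. Throughout I encode $P_m$ on $M^5=\R^5(x,y,z,z_1,z_2)$ by the rank $3$ system generated by
$\theta^1=dz-z_1\,dx$, $\theta^2=dz_1-z_2\,dx$, $\theta^3=dy-z_2^m\,dx$,
and a transformation is an equivalence precisely when it is a local diffeomorphism whose pullback sends the target generators into the source ideal $\langle\theta^1,\theta^2,\theta^3\rangle$.

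For the ``only if'' direction I would rely on the fact recalled above that in the transitive case the pair (symmetry algebra, induced filtration/isotropy) is a complete invariant, so that $P_m\simeq P_n$ forces $J(m)=J(n)$ with $J(m)=\frac{(1-2m)^2}{(1-2m+2m^2)^2}$. It then suffices to show that the fibres of $J$ are exactly the $G$-orbits. First I check that $J$ is $G$-invariant by substituting the two generators: $1-2(1-m)=-(1-2m)$ together with $1-2(1-m)+2(1-m)^2=1-2m+2m^2$ gives $J(\bar m)=J(m)$, and a similar one-line computation (clearing the common denominator $(2m-1)^2$) gives $J(\mu)=J(m)$. Since $J$ is a degree-$4$ rational map of $\R P^1$ while $G$ acts faithfully by M\"obius transformations with generic orbit of size $4$, the invariant $J$ factors through a degree-one, hence invertible, map $\R P^1/G\to\R P^1$; thus every fibre of $J$ is a single $G$-orbit. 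Because these four points already lie in $\R P^1$, the real fibre is the full orbit and admits no extra real point, which yields $n\in G\cdot m$. Finally $1-2m+2m^2=2(m-\tfrac12)^2+\tfrac12>0$ never vanishes, so $J(m)=0$ iff $m=\tfrac12$; hence $P_{1/2}$ (where moreover $\op{ad}(W_6')$ is non-semisimple, carrying the Jordan block noted above) is separated from every $P_m$ with $m\in\R^\times$.

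For the ``if'' direction I would verify the three pullback identities for each map. For $T_a$ (the Legendre transform) this is immediate: writing the target coordinates one gets $T_a^*(d\tilde z-\tilde z_1\,d\tilde x)=-\theta^1$, $T_a^*(d\tilde z_1-\tilde z_2\,d\tilde x)=-z_2^{-1}\theta^2$, and, using $\tilde z_2^{\,1-m}=z_2^{m-1}$, the identity $T_a^*(d\tilde y-\tilde z_2^{\,1-m}\,d\tilde x)=\theta^3-z_2^{m-1}\theta^2$; so $T_a$ carries $P_m$ to $P_{\bar m}$. For $T_b$ the same three computations must be carried out; the only delicate point is that the last coordinate $\tilde z_2=z_2^{2m-1}$ is engineered so that $\tilde z_2^{\,\mu}=z_2^{(2m-1)\mu}=z_2^m$ with $\mu=\frac{m}{2m-1}$, matching the exponent produced on the source side, the remaining two contact-form pullbacks being routine. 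The map $T_aT_b$ then realizes $P_m\simeq P_{\bar\mu}$ by composition, and $\{1,T_a,T_b,T_aT_b\}$ reproduces the action of $G=\Z_2\times\Z_2$.

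The main obstacle is the verification of $T_b$: unlike $T_a$, it mixes the coordinates with $\sqrt{2m-1}$ and fractional powers of $z_2$, so the pullback identities become lengthy, and one must confirm that the $x$-components and the $z_2$-powers cancel exactly as in the $T_a$ computation. This is mechanical once the Pfaffian framework is fixed, so the genuinely nontrivial work is not the verification but the discovery of $T_b$. I would obtain it by writing the most general map compatible with the weighted structure (respecting the grading element $W_4$ and the resulting weights of the coordinates), and then fixing the finitely many coefficients by imposing that the three target forms pull back into $\langle\theta^1,\theta^2,\theta^3\rangle$; the invertibility and domain (needing $m\neq\tfrac12$ and a suitable sign of $z_2$ for the fractional powers) are then read off directly.
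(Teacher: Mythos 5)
Your proposal is correct and takes essentially the same route as the paper: necessity via the numerical invariant $J(m)=\frac{(1-2m)^2}{(1-2m+2m^2)^2}$ coming from the transitive symmetry algebra with its filtration (your fibre-of-$J$-equals-$G$-orbit computation makes explicit what the paper only asserts as the 4-fold parametrization $k\mapsto\pm k^{\pm1}$), and sufficiency by direct pullback verification on the rank 3 Pfaffian system, which is exactly the paper's ``straightforward computation.'' The only divergence is in provenance, not proof: you propose finding $T_b$ from a weighted ansatz constrained by the pullback conditions, whereas the paper derives it by matching bases of the symmetry algebras (via the decomposition of $W_4$ through $W_1,W_2,W_3,W_5$) and reconstructing the remaining components by total derivatives $\bar{z}_1=\frac{D\phi}{D\psi}$, $\bar{z}_2=\frac{D\phi_1}{D\psi}$ --- both are legitimate, and the verification step is identical either way.
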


The proof is a straightforward computation. Notice that the action of $G$ on $\R$ in the
parameter $k=2m-1$ is simpler, $a(k)=-k$, $b(k)=k^{-1}$ (the latter is not defined at 0,
which corresponds to $m=1/2$). 

Consider the symmetry group of the square $\tilde G=\op{Dih}_4$. It has 8 elements,
can be included into exact sequence $1\to\Z_4\to\tilde G\to\Z_2\to1$,
and admits a surjective homomorphism $p:\tilde{G}\to G$ with $\op{Ker}(p)=\Z_2$.

 \begin{theorem}
Equip the bundle ${\R^\times}\times M$ over $\R^\times$ with the distribution
$P_m$ in the fiber $M_m$ over $m\in\R^\times$. There is a local action $T$ of the group
$\tilde G$ on ${\R^\times}\times M$ covering the action of $G$ on $\R^\times$ such that $T$
preserves the distribution in the fibers, i.e. for every $g\in\tilde G$, 
$T_g:M_m\to M_{p(g)\cdot m}$ 
maps the
Pfaffian system corresponding to $P_m$ to that of $P_{p(g)\cdot m}$.
 \end{theorem}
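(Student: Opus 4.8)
The plan is to use the Proposition directly: since $T_a$ and $T_b$ are already equivalences of the relevant distributions, every word in them is automatically an equivalence between the appropriate fibres, so the theorem reduces to identifying the abstract group $\tilde G$ generated by $T_a,T_b$ — viewed as fibre maps covering the $G$-action on $\R^\times$ — and to exhibiting $p$. Here the composition is the bundle one: for $T_g\colon M_m\to M_{p(g)\cdot m}$ and $T_h\colon M_m\to M_{p(h)\cdot m}$, the product $T_gT_h$ acts on $M_m$ as $T_g|_{M_{p(h)\cdot m}}\circ T_h$ and lands in $M_{p(gh)\cdot m}$. First I would introduce the fibrewise involution $\e\colon(x,y,z,z_1,z_2)\mapsto(-x,-y,z,-z_1,z_2)$ and check that it preserves the Pfaffian system of every $P_m$ (each generator up to sign), that $\e^2=\op{id}$ and $\e\neq\op{id}$; this is the candidate generator of $\op{Ker}(p)$.

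Next I would compute the squares of the two natural order-two candidates. The relation $T_a^2=\op{id}$ is immediate from the Legendre formula (substituting twice returns every coordinate). The heart of the argument is to evaluate $T_b^2$ and $(T_aT_b)^2$: tracking the parameter $m\mapsto\mu=\tfrac{m}{2m-1}$ together with the induced behaviour of $\sqrt{2m-1}$ and of $z_2\mapsto z_2^{2m-1}$, one finds after simplification that each of these products leaves $z$ and $z_2$ fixed and multiplies $x,y,z_1$ by a sign determined by the chosen square-root branch. Consistent continuation then makes exactly one of $T_b^2,(T_aT_b)^2$ equal to $\e$ and the other equal to $\op{id}$ (which of the two depends on the component of $\R^\times$, reflecting the branch point of $\sqrt{2m-1}$ at the excluded value $m=\tfrac12$). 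The non-triviality of this sign is precisely what separates $\tilde G$ from $G$.

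With these relations in hand the group theory is short. Write $s=T_a$ and let $r$ be whichever of $T_b,\,T_aT_b$ squares to $\e$; then $s^2=\op{id}$, $r^2=\e$, $r^4=\e^2=\op{id}$, and the conjugation relation $srs^{-1}=r^{-1}$ follows from the complementary identity, namely that the other of $T_b^2,(T_aT_b)^2$ equals $\op{id}$. These are exactly the defining relations of $\op{Dih}_4$, so $\tilde G\cong\op{Dih}_4$ with $\Z_4=\langle r\rangle$ and centre $\langle\e\rangle$. The map $p\colon\tilde G\to G$ sending each element to the base transformation it covers (so $p(T_a)=a$, $p(T_b)=b$) is then a surjective homomorphism whose kernel is the fibre-preserving subgroup; this subgroup is generated by $\e$, hence is $\Z_2$, giving $\tilde G/\langle\e\rangle\cong\Z_2\times\Z_2=G$ and the exact sequence of the statement. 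Since every $T_g$ is a composition of the equivalences $T_a,T_b$, it maps the Pfaffian system of $P_m$ to that of $P_{p(g)\cdot m}$; the presence of real powers of $z_2$ and of $\sqrt{2m-1}$ confines all maps to open sets and to the double cover of $\R^\times$, which is exactly why the action is only local.

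The step I expect to be the main obstacle is the evaluation of $T_b^2$ and $(T_aT_b)^2$: carrying the branch of $\sqrt{2m-1}$ consistently through the substitution $m\mapsto\mu$ (equivalently, fixing a lift of $b$ to the double cover) so as to pin the sign down to $-1$ and thereby certify $\e\neq\op{id}$. Once that sign is secured, the involutivity of $T_a$, the construction of $p$, and the final identification with $\op{Dih}_4$ are routine bookkeeping.
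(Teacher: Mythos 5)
Your group-theoretic scaffolding (two generators, an order-two anomaly in the kernel of $p$, the presentation of $\op{Dih}_4$, the locality caveat coming from $\sqrt{2m-1}$ and real powers of $z_2$) matches the paper's, but the decisive computation is both deferred and wrongly conjectured. The paper's proof consists precisely of the relations you flag as "the main obstacle": it verifies $T_a^2=\op{Id}$, $T_b^2=\op{Id}$ (uniformly on $\R^\times$, not depending on the component), and that the anomaly $\z=(T_aT_b)^2$ satisfies $\z\neq\op{Id}$, $\z^2=\op{Id}$, whence $T_aT_b$ has order $4$ and $\langle T_a,T_b\rangle\simeq\op{Dih}_4$ with $\op{Ker}(p)=\langle\z\rangle$. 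Your conjectured outcome — that one of $T_b^2$, $(T_aT_b)^2$ equals the sign map $\e:(x,y,z,z_1,z_2)\mapsto(-x,-y,z,-z_1,z_2)$ and the other the identity, with the choice depending on the component of $\R^\times$ — is false. A one-line exponent chase on the fifth coordinate already shows this: the fifth slot of $T_b$ is $z_2^{2m-1}$ and of $T_a$ is $z_2^{-1}$; composing through the parameter changes $m\mapsto\mu,\bar\mu$ gives fifth slot $z_2$ for $T_b^2$ but $z_2^{-1}$ for $(T_aT_b)^2$. So $(T_aT_b)^2$ does \emph{not} fix $z_2$ and is not $\e$; indeed the paper displays the anomaly $T_\z$ explicitly right after the proof, with components such as $yz_2^{1-2m}$ and $z_2^{-1}$, nothing like a sign flip. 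Moreover, had the relations genuinely varied with the component of $\R^\times$, you would not obtain a single well-defined action of $\op{Dih}_4$ on the bundle, which is what the theorem asserts.

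A secondary gap: your $\e$ is indeed a fibrewise symmetry of every $P_m$ (each generator of the Pfaffian system is preserved up to sign), but you never show it belongs to the group generated by $T_a,T_b$ — and it does not: the only nontrivial fibre-preserving element of that group is $\z=T_\z$, a different involution. Using $\e$ as "the candidate generator of $\op{Ker}(p)$" therefore substitutes an unrelated map for the element the proof must actually produce. Once the correct relations $T_a^2=T_b^2=\op{Id}$, $(T_aT_b)^2=\z\neq\op{Id}$, $\z^2=\op{Id}$ are established by direct substitution in the formulas of the Proposition, the rest of your argument (the dihedral presentation with $r=T_aT_b$, $\Z_4=\langle r\rangle$, centre $\langle\z\rangle$, and $p(T_a)=a$, $p(T_b)=b$) goes through exactly as in the paper; but as written, the heart of the proof is missing and the guess offered in its place contradicts the explicit formulas.
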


 \begin{proof}
The Legendre transform is indeed an involution $T_a^2=\op{Id}$.
One can also check that $T_b^2=\op{Id}$. However $T_aT_b\neq T_bT_a$.
Denote $\z=(T_aT_b)^2$, then we have $T_aT_b=T_bT_a\z$ and $\z^2=\op{Id}$.

The group generated by $a,b$ is $\op{Dih}_4$: $a$ corresponds to reflection with respect to
the axis passing through the center of the square and the mid-point of one side,
$b$ corresponds to reflection with respect to a diagonal. Then $\z$ corresponds to reflection
with respect to the center of the square ($\z a=a\z$ is the reflection with respect to the perpendicular
axis, and $\z b=b\z$ is the reflection with respect to the other diagonal).

The subgroup $\Z_4\subset\tilde{G}$ generated by $ab$ is normal, the quotient
$\tilde{G}/\Z_4\simeq\Z_2$ is generated by $\z$.
The projection $p:\tilde{G}\to G$ is given by $p(a)=a$, $p(b)=b$.
Thus we get the required action of $\tilde{G}$ on $M^5$ that is intertwined via $p$
with the action of $G$ on $\R^1$.
 \end{proof}

{\bf Remark.} There are 2 (conjugated) subgroups $\Z_2\times\Z_2\simeq G_{(i)}\subset\tilde{G}$,
namely $G_{(1)}=\langle a,\z\rangle$ and $G_{(2)}=\langle b,\z\rangle$. Both project to $\Z_2\subset G$
via $p$, so they cannot be used to reduce $\tilde{G}$ to $G$.
It is not clear if there is a local action of $G$ on $\R\times M^5$,
preserving the distributions in fibers and covering the action of $G$ on $\R^\times$.

\smallskip

The anomaly measuring non-commutation of $T_a$ and $T_b$ is given by the formula
 \begin{multline*}
T_\z(x,y,z,z_1,z_2)=
\Bigl(\tfrac{2m-1}{m-1}yz_2^{1-m}-\tfrac{m}{m-1}z_1, \ yz_2^{1-2m}, \ \\
\tfrac1{\mu}(y^2z_2^{1-2m}-xyz_2^{1-m})+xz_1-z, \ \tfrac1{\mu}yz_2^{-m}-\tfrac{m-1}mx, \ z_2^{-1}\Bigr).
 \end{multline*}

\medskip

Let us explain how the equivalence of the models was obtained.
The idea is that since the symmetry algebra is transitive, the filtration on it, induced
by the stabilizer subalgebra and the distribution, is the complete invariant of the
geometric structure (distribution). Choosing a basis (generators of the transitive part)
of the vector fields on $M$ among $\op{sym}(P_m)$ and decomposing the other fields
(elements of the stabilizer) from $\op{sym}(P_m)$ via them, we obtain
the functions-coefficients, which serve as the invariants of the problem.
For an equivalent distribution we have to match the bases, and then equality of the
invariants yields the equivalence (up to symmetry).
In \cite{DK} a criterion for sufficiency of these invariants is given.

In the above model, we choose $W_1,W_2,W_3,W_5,W_6'$ as a basis of vector fields on $M^5$
and decompose $W_4,W_7$ via them. In fact, the decomposition
 $$
W_4=xW_1+yW_2+(2z-xz_1)W_3+z_1W_5
 $$
suffices for our purposes, since knowing the $(x,y,z)$-components of the equivalence
$w\mapsto \bar{w}=T(w)$, $w=(x,y,z,z_1,z_2)$, namely
$\bar{x}=\psi(w)$, $\bar{y}=\vp(w)$, $\bar{z}=\phi(w)$,
we can restore the rest by the formulae $\bar{z}_1=\frac{D\phi}{D\psi}=\phi_1$,
$\bar{z}_2=\frac{D\phi_1}{D\psi}$, $\bar{y}_1=\frac{D\vp}{D\psi}$,
where $D=\p_x+f\p_y+z_1\p_z+z_2\p_{z_1}+z_3\p_{z_2}$,
$y_1=f(w)$ is the original Monge equation, and $\bar{y}_1=\bar{f}(\bar{w})$
is the transformed Monge equation ($z_3$ has to cancel in the computation).

For instance, passing from $m$ to $1-m=a\cdot m$ we have to adjust the bases to
match the structure equations. This is achieved by the choice $\bar{W}_1=W_5$, $\bar{W}_2=W_2$,
$\bar{W}_3=-W_3$, $\bar{W}_4=W_4$, $\bar{W}_5=W_1$, $\bar{W}_6'=-W_6'$, $\bar{W}_7=\frac{m-1}mW_7$,
and this yields the Legendre transform $T_a$. Similarly, we obtained the formula for $T_b$ etc.

\section{Submaximal (2,3,5) distributions: Monge equations II}\label{S3}

Now consider the family (\ref{Qm}). Following \cite{DZ$_2$} it is more convenient to write
it in the following form:
 \begin{equation}\label{Qq}
y'=(z'')^2+(a^2+b^2)\,(z')^2+a^2b^2\,z^2
 \end{equation}
with the pair $[a:b]\in\C P^1$ (or $[a:b]\in\R P^1$ in the real case, but then this parametrization
does not cover all pairs $(r_1,r_2)$; however the case $[a:b]=\pm3^{\pm1}$ corresponding to $G_2$ symmetry
is real).

\smallskip

{\bf Remark.} The family (\ref{Pm}), united with the special Monge equation (\ref{ln}) below, 
gives a complete list of submaximal symmetric equations over $\C$, but not over $\R$. For instance, 
the Monge equation $(y')^2=1+\epsilon(z'')^2$ is not real equivalent to any of them for $\epsilon=+1$
(but for $\epsilon=-1$ it is equivalent to the Monge equation $P_{\frac12}$).
The family (\ref{Qm}) however yields the complete list of submaximal symmetric equations over both $\C$
and $\R$. It is clear that re-parametrization (\ref{Qq}) does not change generality over $\C$,
but over $\R$ it gives only those of equations from (\ref{Qm}) for which $r_2\geq0$, $r_1\geq2\sqrt{r_2}$.
Henceforth we restrict to either complex or real parameters for equation (\ref{Qq}).

\smallskip

The parameters $(a,b)$ can be considered up the action of $\op{Dih}_4$ generated by $(a,b)\mapsto (b,a), (-a,b), (a,-b)$ and up to a non-zero scale $(a,b)\mapsto (\lambda a, \lambda b)$, $\lambda\ne 0$. The group of scalings intersects with $\op{Dih}_4$ by its center $(a,b)\mapsto (\pm a,\pm b)$. So, we get the action of $G=\op{Dih}_4/\Z_2 = \Z_2 \times \Z_2$ when passing to the projective class $[a:b]\in P^1$.  In the affine chart $[1:\kappa]$ this action  reduces to  $\kappa\mapsto \pm \kappa$, $\kappa\mapsto \pm\kappa^{-1}$.

The quotient space $\C P^1/G$ (orbifold) is isomorphic to the half-disk: $\bar{D}_+=\{\kappa\in\C:|\kappa|\le1,\op{Re}(\kappa)\ge0\}$ and the two singular points (with orbits of cardinality 2) are $0,1\in\bar{D}$. They correspond to the points $[a:b]=[1:0],[1:1]\in\C P^1$, that will play a role in what follows.

Similarly, $\R P^1/G$ is an interval with end-points corresponding to singular
orbits and a marked point inside, corresponding to $G_2$-symmetry.

Denote Monge equation (\ref{Qq}) by $Q_{ab}$;
it is encoded as a rank 3 Pfaffian system on $\R^5(x,y,z,z_1,z_2)=N^5$.
Provided $a\pm b\neq0$, $a\cdot b\neq0$, $a:b\neq\pm3^{\pm1}$, its symmetries are
  \begin{multline*}
\hspace{-10pt}\op{sym}(Q_{ab})=\langle
U_6=\p_x, \ U_3=\p_y, \ U_4=2y\p_y+z\p_z+z_1\p_{z_1}+z_2\p_{z_2},\\
U_1=\xi(-a,-b), \ U_2=\xi(-b,-a), \ U_5=\xi(a,b), \ U_7=\xi(b,a)\rangle,
 \end{multline*}
where
 $$
\xi(a,b)=e^{bx}\bigl(\p_z+b\,\p_{z_1}+b^2\p_{z_2}+2b(a^2z+bz_1)\p_y\bigr).
 $$
The fields are numerated to match the structure equations of the algebra $\mathfrak{m}^7$
up to scaling, and these latter can be fixed provided we match the parameters $a=\pm\frac12$,
$b=\pm(m-\frac12)$ in (\ref{Qq}); since the signs play no role, we choose both "\!$+$" in what follows.

In other words, following the approach outlined at the end of the previous section,
we find that ODE (\ref{Pm}) is equivalent to the following Monge equation
 \begin{equation}\label{Qnm}
y'=(z'')^2+(m^2-m+\tfrac12)\,(z')^2+\tfrac14(m-\tfrac12)^2z^2.
 \end{equation}
Furthermore the suggested method implies the following result.
 \begin{prop}
An equivalence $\Psi:N^5\to M^5$ mapping (\ref{Qnm}) to (\ref{Pm}) is
given by the following formula (for $m\neq\tfrac12$)
 \begin{multline*}
\Psi(x,y,z,z_1,z_2)=
\Bigl(\bigl(\tfrac1mz_2+z_1+\tfrac{2m-1}{4m}z\bigr)e^{-\frac12x}, \
\tfrac1m(z_2-\tfrac14z)e^{(m-\frac12)x}, \\
\tfrac1{2m^2}\bigl(y-(m-\tfrac12)z{z_2}+2(m-1)z_1z_2+z_2^2+m(m-1)z_1^2-m(m-\tfrac12)z{z_1}\\
-\tfrac12(m^2-2m+\tfrac34)z^2\bigr), \
\tfrac1m(z_2+(m-1)z_1-\tfrac12(m-\tfrac12)z)e^{\frac12x}, \ e^x\Bigr).
 \end{multline*}
 \end{prop}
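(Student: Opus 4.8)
The plan is to verify directly that the explicit map $\Psi$ given in the statement transforms the Monge equation (\ref{Qnm}) into (\ref{Pm}). The strategy follows exactly the method outlined at the end of Section~\ref{S2}: rather than guess the full five-component transformation, I would only need the first three components $(\bar x,\bar y,\bar z)=(\psi,\vp,\phi)$ of $\Psi$, since the remaining two components $\bar z_1,\bar z_2$ are then forced by the prolongation formulae $\bar z_1=D\phi/D\psi$ and $\bar z_2=D(\bar z_1)/D\psi$, where $D=\p_x+f\,\p_y+z_1\p_z+z_2\p_{z_1}+z_3\p_{z_2}$ is the total derivative operator along the \emph{source} equation (\ref{Qnm}), with $f=(z_2)^2+(m^2-m+\tfrac12)z_1^2+\tfrac14(m-\tfrac12)^2z^2$. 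The claim to be checked is that with these definitions the identity $\bar y_1=\bar f(\bar w)=(\bar z_2)^m$ holds identically, i.e. the image satisfies (\ref{Pm}).

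First I would confirm that the first three components of $\Psi$ agree with what the basis-matching procedure produces. Concretely, the equivalence is built so that the chosen transitive basis $W_1,W_2,W_3,W_5,W_6'$ of $\op{sym}(P_m)$ on $M^5$ is matched, under $\Psi$, with the correspondingly numbered symmetries $U_i$ of $\op{sym}(Q_{ab})$ on $N^5$ via the parameter identification $a=\tfrac12$, $b=m-\tfrac12$ stipulated just before (\ref{Qnm}). Because the symmetry algebra acts transitively and the induced filtration is a complete invariant (as established earlier and in \cite{K$_2$,DK}), matching the structure constants of the $U_i$ to those of the $W_i$ determines $\Psi$ up to symmetry; the formula in the Proposition is simply one explicit representative of this matching. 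So the conceptual content is already guaranteed by the completeness of the filtration invariant, and what remains is a verification.

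Concretely I would carry out the following steps. (i)~Compute $D\psi$, $D\phi$, and $D\vp$ using the total derivative $D$ above, substituting the source equation for $f$; the exponential prefactors $e^{-\frac12 x}$, $e^{(m-\frac12)x}$, $e^{\frac12 x}$ mean these derivatives pick up the stated linear-in-$x$ scalings, matching the grading action of $U_4$ and the eigenvalue structure of $\op{ad}(W_6')$. (ii)~Form $\bar z_1=D\phi/D\psi$ and check it equals the fourth listed component $\tfrac1m(z_2+(m-1)z_1-\tfrac12(m-\tfrac12)z)e^{\frac12x}$; here the crucial point is that the $z_3$-dependence, which enters $D\phi$ through $\p_{z_2}\phi$, must cancel against the denominator so that $\bar z_1$ is a genuine function on $N^5$. (iii)~Differentiate once more to get $\bar z_2=D(\bar z_1)/D\psi$ and confirm it equals $e^x$. (iv)~Finally verify $\bar y_1=D\vp/D\psi=(e^x)^m=(\bar z_2)^m$, which is the target equation (\ref{Pm}).

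The main obstacle is purely computational bookkeeping in step~(ii)--(iv): ensuring that all the $z_3$ terms cancel and that the messy quadratic polynomial in the third component $\phi$ differentiates to produce exactly the right linear combinations. The cleanest way to control this is to exploit the exponential factors, which diagonalize the weight-$W_6'$ action, so that $\bar z_1,\bar z_2$ automatically carry the expected weights $\tfrac12,1$ in the grading; any term of the wrong weight must vanish, which organizes the cancellations and drastically cuts down the algebra. With that weight accounting in place, the remaining check is a finite identity among polynomials in $(z,z_1,z_2)$ with coefficients rational in $m$, which one verifies by direct expansion (or symbolically). The degenerate value $m=\tfrac12$ is excluded in the statement precisely because $b=m-\tfrac12$ vanishes there, collapsing the $\op{ad}(W_6')$-spectrum into a Jordan block as noted in Section~\ref{S2}, so the generic diagonal argument does not apply and that case must be handled separately.
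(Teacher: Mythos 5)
Your proposal is correct and follows essentially the same route as the paper, which derives this Proposition precisely by the basis-matching algorithm of Section~\ref{S2} (with $a=\tfrac12$, $b=m-\tfrac12$) and restores $\bar z_1$, $\bar z_2$, $\bar y_1$ via the total-derivative formulae $\bar z_1=D\phi/D\psi$, $\bar z_2=D\bar z_1/D\psi$, $\bar y_1=D\vp/D\psi$, checking the $z_3$-cancellation and the identity $\bar y_1=(\bar z_2)^m$. Your weight/grading bookkeeping to organize the cancellations and your explanation of the $m=\tfrac12$ exclusion (degeneration of $b=m-\tfrac12$ and the Jordan block in $\op{ad}(W_6')$) are both consistent with the paper's treatment, which handles that case separately via $\bar\Psi$.
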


The choice $(a,b)=(1,0)$ in (\ref{Qq}), namely the Monge equation
 \begin{equation}\label{N12}
y'=(z'')^2+(z')^2
 \end{equation}
is equivalent to (\ref{Pm}) for $m=\frac12$
(notice that the above formulae for $\op{sym}(Q_{10})$ give only 6 of the symmetries since
$\xi(\pm1,0)=\p_z$; to these we shall add the 7th symmetry $\tilde{U}_7=\p_{z_1}+x\p_z+2z\p_y$).
The precise equivalence $\bar{\Psi}:N^5\to M^5$
between (\ref{N12}) and (\ref{Pm}) for $m=\frac12$ is given by
 $$
\bar\Psi(x,y,z,z_1,z_2)=
\Bigl((z_2-z_1)e^x,z_2-z,\tfrac{z_2^2-z_1^2}2+z_1z_2-y,(z_1+z_2)e^{-x},e^{-2x}\Bigr).
 $$

\medskip

Consider now a special Monge equation
 \begin{equation}\label{ln}
y'=\ln(z'').
 \end{equation}
This is the only underdetermined ODE with 7D symmetry algebra missing in the family
(\ref{Pm}), as was noticed in \cite{DG}. We shall show that it is equivalent to (\ref{Qq})
with the parameters $(a,b)=(1,1)$, namely to the Monge equation $Q_{11}$:
 \begin{equation}\label{NS}
y'=(z'')^2+2(z')^2+z^2;
 \end{equation}
notice that no parameter $m$ in (\ref{Qnm}) gives this ODE.

The symmetry algebra for the corresponding rank 2 distribution is
(notice that this is the 2nd singular case $a=\pm b$, so the formulae
do not follow from these of $\op{sym}(Q_{ab})$):
 \begin{multline*}
\hspace{-10pt}\op{sym}[(\ref{NS})]=\langle
U_2=e^x\bigl(2(z_1+z)\p_y+\p_z+\p_{z_1}+\p_{z_2}\bigr),
U_3=8\p_y, U_6=\p_x\\
U_1=e^x\bigl(2((x+1)z_1+(x-2)z)\p_y+(x-1)\p_z+x\p_{z_1}+(x+1)\p_{z_2}\bigr),\\
U_4=2y\p_y+z\p_z+z_1\p_{z_1}+z_2\p_{z_2},
U_5=e^{-x}\bigl(2(z_1-z)\p_y+\p_z-\p_{z_1}+\p_{z_2}\bigr),\\
U_7=e^{-x}\bigl(2((x-2)z_1-(x+1)z)\p_y+x\p_z-(x-1)\p_{z_1}+(x-2)\p_{z_2}\bigr)
\rangle.
 \end{multline*}
These are numerated to correspond to the structure equations in the numeration
of the symmetries of (\ref{ln}):
 \begin{multline*}
\hspace{-10pt}\op{sym}[(\ref{ln})]=\langle
V_1=-z_2^{-1}\bigl(\p_x+(\ln(z_2)+1)\p_y-(yz_2-z_1)\p_z\bigr)+(\ln(z_2)-1)\p_{z_1},\\
V_2=-2(x\p_z+\p_{z_1}),\ V_3=-2\p_z,\ V_4=x\p_x+y\p_y+2z\p_z+z_1\p_{z_1},\\
V_5=2\p_y,\ V_6=x\p_x+(y-2x)\p_y-z_1\p_{z_1}-2z_2\p_{z_2},\ V_7=\p_x\rangle
 \end{multline*}

The symmetry algebra of this exceptional case is similar to the general case,
the only difference is that in the normal form of $\op{ad}(W_6')|_{\mathfrak{h}_{-1}}$
(now we have $V_6$ instead of $W_6'$) the sizes of Jordan blocks are $(2,2)$. More precisely,
the above Lie algebra is solvable and has the structure
$\tilde{\mathfrak{m}}^7=\mathfrak{n}^5\rtimes\R^2$, where
$\mathfrak{n}^5=\mathfrak{h}_{-1}\oplus\mathfrak{h}_{-2}$ is the Heisenberg algebra
(indices denote the grading) given by the symplectic form on
$\mathfrak{h}_{-1}=\langle V_1,V_2,V_5,V_7\rangle$ with values in $\mathfrak{h}_{-2}=\langle V_3\rangle$.
The only non-trivial brackets are
 $$
[V_1,V_5]=V_3,\ [V_2,V_7]=-V_3.
 $$
This is right-extended (extension via derivations)
by $\R^2=\langle V_4,V_6\rangle$ via the grading element $V_4$, $\op{ad}(V_4)|_{\mathfrak{h}_k}=k\cdot\op{id}$,
and $V_6$ given by the action ($\theta_i$ is the coframe dual to $V_i$)
 $$
\op{ad}(V_6)=(V_1+V_2)\ot\theta_1+V_2\ot\theta_2-V_5\ot\theta_5+(V_5-V_7)\ot\theta_7.
 $$
Equalizing the coefficients of the decomposition of $U_4$ via $U_2,U_3,U_5,U_7$ with
the same for the fields $V_i$ we obtain an equivalence between the Monge equations
(\ref{ln}) and (\ref{NS}). Denote by $K^5=\R^5(x,y,z,z_1,z_2)$ the manifold-equation
corresponding to ODE (\ref{ln}).

 \begin{prop}
An equivalence $\Phi:N^5\to K^5$ mapping (\ref{NS}) to (\ref{ln}) is
given by the following formula
 \begin{multline*}
\Phi(x,y,z,z_1,z_2)=
\Bigl(
\tfrac12(z-z_2)e^x, \ \bigl(xz_2-z_1-(x-1)z\bigr)e^x, \\
\tfrac18\bigl(z_2(z_2+2z)-3z^2+4z_1z_2-2y\bigr), \ -\tfrac12(z_2+2z_1+z)e^{-x}, \ e^{-2x}
\Bigr).
 \end{multline*}
 \end{prop}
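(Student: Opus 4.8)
The plan is to prove this in the same constructive spirit that the paper has already used twice, treating $\Phi$ as an unknown point transformation between the two rank $3$ Pfaffian systems and pinning it down through the symmetry data. The final statement asks for an explicit map $\Phi:N^5\to K^5$ carrying equation (\ref{NS}) to equation (\ref{ln}). Since both Monge equations realize the same abstract solvable algebra $\tilde{\mathfrak m}^7=\mathfrak n^5\rtimes\R^2$ with Jordan block sizes $(2,2)$ in $\op{ad}(V_6)|_{\mathfrak h_{-1}}$, and since the symmetry algebra together with its induced filtration is the complete invariant in the transitive case (as recalled at the end of Section~\ref{S2}), an equivalence must intertwine the listed bases of $\op{sym}[(\ref{NS})]$ and $\op{sym}[(\ref{ln})]$. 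The numeration $U_i\leftrightarrow V_i$ has already been arranged so that the structure equations match, so the matching of bases is essentially forced.

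First I would extract the three base components of $\Phi$. Following the recipe spelled out in the paragraph preceding Proposition~2, it suffices to determine $\bar x=\psi(w)$, $\bar y=\vp(w)$, $\bar z=\phi(w)$ on $N^5$, because the jet components are then recovered by total differentiation: $\bar z_1=D\phi/D\psi$, $\bar z_2=D(\bar z_1)/D\psi$, $\bar y_1=D\vp/D\psi$, with $D=\p_x+f\p_y+z_1\p_z+z_2\p_{z_1}+z_3\p_{z_2}$ and $f=(z_2)^2+2(z')^2+z^2$ the right-hand side of (\ref{NS}). The crucial normalization, stated in the lead-in to the proposition, is to equate the coefficients in the decomposition of the grading-shifted element $U_4$ (respectively $V_4$) along the $\mathfrak h_{-1}$-generators $U_2,U_3,U_5,U_7$ (respectively $V_2,V_3,V_5,V_7$). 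Because $U_4$ and $V_4$ are the common grading derivations, this decomposition yields scalar functions on each manifold that are genuine invariants, and setting them equal gives a system of equations for $\psi,\vp,\phi$. The exponential factors $e^{\pm x}$ appearing in $\Phi$ are dictated by the eigenfunctions of the $\pm1$ part of the grading, and the polynomial combinations of $z,z_1,z_2$ by the weight-$0$ and weight-$-2$ pieces.

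Having produced the candidate formulas for $\psi,\vp,\phi$, I would complete $\Phi$ by the prolongation formulas above and then verify two things directly. The verification is the substantive content and the place where the computation must actually close: one must check that $D z_3$ cancels identically in the expression for $\bar z_2$ (so that $\Phi$ descends to a genuine map of the $5$-jet manifolds, not merely a formal one), and that the pulled-back equation $\bar y_1=\bar f(\bar w)$ reproduces $\ln(\bar z_2)$, i.e. that $D\vp/D\psi=\ln\!\bigl(D(\bar z_1)/D\psi\bigr)$ on the solution set of (\ref{NS}). The appearance of the logarithm in (\ref{ln}) from the polynomial-and-exponential data of (\ref{NS}) is exactly where I expect the main obstacle to lie: the identity $\bar y_1=\ln\bar z_2$ will emerge only after using the constraint $f=z_2^2+2z_1^2+z^2$ to eliminate the off-shell directions, and keeping track of the $e^{\pm x}$ weights so that $\bar z_2=e^{-2x}$ forces $\bar y_1=-2x$ to match $\ln(e^{-2x})$.

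Finally, a quick structural sanity check supports the answer before the grind: under $\Phi$ one reads off $\bar z_2=e^{-2x}$, consistent with the grading element $V_4$ acting with weight on the fiber coordinate, and $\bar x=\tfrac12(z-z_2)e^x$ sits in the weight $+1$ eigenspace as required. I would present the explicit $\Phi$ as in the statement and relegate the cancellation of $z_3$ and the verification that the transformed equation is precisely $\ln$ to a direct (if lengthy) substitution, noting as the paper does that \emph{the proof is a straightforward computation} once the base components are correctly normalized against the decomposition of $U_4$ and $V_4$.
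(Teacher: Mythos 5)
Your proposal is correct and is essentially the paper's own argument: the paper derives $\Phi$ by exactly this route, equalizing the coefficients of the decomposition of $U_4$ via $U_2,U_3,U_5,U_7$ with those of $V_4$ via $V_2,V_3,V_5,V_7$ (after the numeration of bases is fixed to match the structure equations of $\tilde{\mathfrak{m}}^7$), and then restoring the jet components by the prolongation formulas $\bar z_1=D\phi/D\psi$, $\bar z_2=D\bar z_1/D\psi$, $\bar y_1=D\vp/D\psi$ from Section~2. Your closing verification (cancellation of $z_3$ and the identity $D\vp/D\psi=\ln\bigl(D\bar z_1/D\psi\bigr)$, which indeed reduces to $\bar y_1=-2x=\ln(e^{-2x})$) is precisely the ``straightforward computation'' the paper leaves implicit, so nothing is missing.
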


\medskip

Finally, we can also consider a similar to (\ref{ln}) Monge equation
 \begin{equation}\label{exp}
y'=\exp(z'').
 \end{equation}
Direct computations show that the symmetry algebra of this underdetermined ODE is
isomorphic to that of (\ref{Pm}) for $m=\frac12$. Indeed, the symmetries are
 \begin{multline*}
\hspace{-10pt}\op{sym}[(\ref{exp})]=\langle
V_1=\tfrac12\p_y, V_2=-(x\p_z+\p_{z_1}), V_3=-\tfrac12\p_z, V_7=\p_x,\\
V_4=x\p_x+y\p_y+2z\p_z+z_1\p_{z_1},
V_6=-\tfrac12(y\p_y+\tfrac12x^2\p_z+x\p_{z_1}+\p_{z_2}),\\
V_5=e^{z_2}\p_x+\tfrac12e^{2z_2}\p_y-(y-z_1e^{z_2})\p_z+(z_2-1)e^{z_2}\p_{z_1}\rangle
 \end{multline*}
(the numeration of the basis is adjusted, so that the structure equations coincide with these for
the equation (\ref{Pm}) with $m=\tfrac12$).

Thus there must be an equivalence, and following the developed algorithm we easily find it.
Denote by $L^5=\R^5(x,y,z,z_1,z_2)$ the equation-manifold of (\ref{exp}).

 \begin{prop}
An equivalence $\Upsilon:L^5\to M^5$ mapping (\ref{exp}) to (\ref{Pm}) for $m=\frac12$ is
given by the formula
 \begin{multline*}
\Upsilon(x,y,z,z_1,z_2)=\\
=\Bigl(2y-x\,e^{z_2}, x+z_1-xz_2, 2(xz_1-z)-x^2(z_2-\tfrac12), x\,e^{-z_2}, e^{-2z_2}\Bigr).
 \end{multline*}
 \end{prop}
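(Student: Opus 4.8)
The plan is to verify the displayed formula directly, following the prolongation method of Section~\ref{S2}: one treats $\Upsilon$ as the contact prolongation of its three base components and checks that it intertwines the two equations. Writing $\bar w=\Upsilon(w)$ with $w=(x,y,z,z_1,z_2)$, the base components $\bar x=\psi$, $\bar y=\vp$, $\bar z=\phi$ are
$$\psi=2y-xe^{z_2},\quad \vp=x+z_1-xz_2,\quad \phi=2(xz_1-z)-x^2(z_2-\tfrac12).$$
Since the source equation (\ref{exp}) reads $y_1=f=e^{z_2}$, its total derivative operator is $D=\p_x+e^{z_2}\p_y+z_1\p_z+z_2\p_{z_1}+z_3\p_{z_2}$, while the target is (\ref{Pm}) with $m=\tfrac12$, namely $\bar y_1=\bar z_2^{1/2}$.

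First I would compute $D\psi$, $D\vp$, $D\phi$ and verify the crucial feature of the method: the auxiliary jet variable $z_3$ must cancel from every ratio. One finds $D\psi=e^{z_2}(1-xz_3)$, $D\vp=1-xz_3$ and $D\phi=x(1-xz_3)$, so the common factor $(1-xz_3)$ cancels and yields $\bar z_1=D\phi/D\psi=xe^{-z_2}$ and $\bar y_1=D\vp/D\psi=e^{-z_2}$, matching the fourth entry of $\Upsilon$. Setting $\phi_1=\bar z_1=xe^{-z_2}$ one gets $D\phi_1=e^{-z_2}(1-xz_3)$, whence $\bar z_2=D\phi_1/D\psi=e^{-2z_2}$, matching the fifth entry. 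Finally $\bar y_1=e^{-z_2}=(e^{-2z_2})^{1/2}=\bar z_2^{1/2}$, so $\Upsilon$ carries (\ref{exp}) to (\ref{Pm}) with $m=\tfrac12$.

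It remains to confirm that $\Upsilon$ is a local diffeomorphism, which I would do by writing down its inverse on the region $\bar z_2>0$ where the target square root is defined: from $\bar z_2=e^{-2z_2}$ one recovers $z_2=-\tfrac12\ln\bar z_2$, then $x=\bar z_1\bar z_2^{-1/2}$, and the remaining coordinates $z_1,z,y$ solve linearly from $\vp,\phi,\psi$. The only genuinely delicate point is the bookkeeping of the $z_3$-cancellation: it encodes the requirement that $\Upsilon$ respect the contact prolongation and is the single place where an error in the coefficients would be detected, but once $D\psi,D\vp,D\phi$ are seen to share the factor $(1-xz_3)$ the remaining verifications are immediate. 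Note that the existence of \emph{some} equivalence is already guaranteed by the isomorphism of the two symmetry algebras observed above, so the content of the proposition is precisely this explicit formula, which the computation confirms.
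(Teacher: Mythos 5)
Your verification is correct: the computations $D\psi=e^{z_2}(1-xz_3)$, $D\vp=1-xz_3$, $D\phi=x(1-xz_3)$, $D\phi_1=e^{-z_2}(1-xz_3)$ all check out, the $z_3$-cancellation gives $\bar z_1=xe^{-z_2}$, $\bar z_2=e^{-2z_2}$, $\bar y_1=e^{-z_2}=\bar z_2^{1/2}$, and the inverse on $\{\bar z_2>0\}$ confirms $\Upsilon$ is a diffeomorphism onto its image. This is essentially the paper's own approach: it verifies the formula by exactly the prolongation recipe stated at the end of Section~\ref{S2} (restore $\bar z_1$, $\bar z_2$, $\bar y_1$ via the total derivative $D$ with $z_3$ cancelling), the paper merely leaving this straightforward computation implicit.
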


We can compute an equivalence between (\ref{N12}) and (\ref{exp}).
It is given by the composition $T=\Upsilon^{-1}\circ\bar{\Psi}:N^5\to L^5$,
 \begin{multline*}
\!\!\!\!\!T(x,y,z,z_1,z_2)=
\Bigl(\exp(x+e^{-2x})(z_1-z_2)+2(z_2-z), 2z_1e^{-x}+(z_2-z_1)e^x,\\
\,\,\,2(y-z_1^2)+\tfrac12e^{2x}(z_1-z_2)^2,-\exp(x-e^{-2x})(z_1-z_2),\exp(-2e^{-2x})\Bigr).
 \end{multline*}

\section{Rank 2 distributions in higher dimensions}\label{S4}

The most symmetric rank 2 distribution in 6D, not reducible to a 2-distribution
in lower dimensions (this is equivalent to the growth vector (2,3,5,6)),
has symmetry algebra of dimension 11, and is isomorphic to the distribution
of the Monge equation $y'=(z''')^2$ \cite{DZ$_1$,AK}.

An obvious candidate for the submaximal symmetry dimension is $y'=(z''')^m$, 
which generalizes the Monge equations of type I in 5D. However its symmetry algebra is of dimension $\le8$,
while that for generalization of Monge equations of type II is of dimension 9.
Dimension 10 is not realizable, and so the submaximal dimension is 9 \cite{K$_2$}. 

A similar story happens in any dimension $d=n+3>5$. The maximally symmetric
Monge equation is $y'=(z^{(n)})^2$ (with additional condition
of nondegeneracy \cite{AK} or maximal class \cite{DZ$_1$}).
In \cite{DZ$_2$} it was shown that all submaximal rank 2 distributions with at least one non-vanishing Wilczynski invariant are given (both over $\R$ and $\C$) by the Monge equation
 $$
y'=(z^{(n)})^2+r_1(z^{(n-1)})^2+\dots+r_nz^2.
 $$
The structure of the singular strata (and marked points) of the corresponding
orbifold is similar to the considered for $n=2$. For example, over $\C$ the coefficients
$r_1,\dots,r_n$ can be defined via parameters $(a_1,\dots,a_n)\in\C^n$ as:
 \[
t^{2n} + r_1 t^{2n-2} + \dots + r_{n-1} t^2 + r_n = (t^2+a_1^2)(t^2+a_2^2)\cdots(t^2+a_n^2),
 \]
where $a_i$ are defined modulo permutations and arbitrary changes of sign. Algebraically this
defines a natural action of the group $\tilde{G}=S_n\rightthreetimes(\Z_2\times\dots\Z_2)$,
which is isomorphic to the Weyl group of the root system of type $C_n$.

In addition, $(a_1,\dots,a_n)$ is defined up to the non-zero scale, and the group of all scalings
intersects with $\tilde{G}$ by its center given by the map $(a_1,\dots,a_n)\mapsto \pm(a_1,\dots,a_n)$.
So passing to the projective point $[a_1:{\dots}:a_n]\in\mathbb{C}P^{n-1}$ we get the action
of the group $\tilde{G}/\Z_2$ on $\mathbb{C}P^{n-1}$.

The singular strata is defined as a set of all $[a_1:{\dots}:a_n]$ such that the set of
$2n$ numbers $\{\pm a_1,\dots,\pm a_n\}$ can be arranged into an arithmetic sequence
(this is stable with the respect to the above action of the group $\tilde{G}$).
This agrees with what was discussed for $n=2$.

For $n=2$ the factor $\tilde{G}/\Z_2$, as already noticed in Section \ref{S3},
coincides with the group $G=\Z_2\times\Z_2$ from Section \ref{S2}, and the isomorphism $\tilde{G}\simeq\op{Dih}_4$ (the Weyl group of
$C_2\simeq B_2$ is the symmetry group of the square) explains the mysterious appearance of
the dihedral group of order 8 for the Monge submaximal family I in Section \ref{S2}.


\end{document}